\def\endthebibliography{%
  \def\@noitemerr{\@latex@warning{Empty `thebibliography' environment}}%
  \endlist
}
\newcommand{\CP}[2]{\mathbb{P} \mathopen{} \left[ #1 \, \middle| \, #2 \right] \mathclose{} }
\newcommand{\CE}[2]{\mathbb{E} \mathopen{} \left[ #1 \, \middle| \, #2 \right] \mathclose{} }
\newcommand{\e}[1]{\bar{#1}}
\newcommand{\qp}{q^\perp}
\newcommand{\qc}{q^\shortparallel}
\newcommand{\np}{\tilde{p}}
\newcommand{\figref}[1]{\figurename~\ref{#1}}
\DeclareFontFamily{U}{mathb}{}
\DeclareFontShape{U}{mathb}{m}{n}{
  <-5.5> mathb5
  <5.5-6.5> mathb6
  <6.5-7.5> mathb7
  <7.5-8.5> mathb8
  <8.5-9.5> mathb9
  <9.5-11.5> mathb10
  <11.5-> mathbb12
}{}
\newtheoremstyle{break}
  {\topsep}{\topsep}%
  {\itshape}{}%
  {\bfseries}{}%
  {\newline}{}%
\theoremstyle{break}
\newtheorem{theorem}{Theorem}
\newtheorem{lemma}[theorem]{Lemma}
\pgfplotsset{compat=1.14}
\begin{document}

\title{Stability Results on Synchronized Queues in Discrete-Time for Arbitrary Dimension}

\author{Richard~Schoeffauer\textsuperscript{1}
        and~Gerhard~Wunder\textsuperscript{2}
\thanks{\textsuperscript{1} richard.schoeffauer@fu-berlin.de, \textsuperscript{2} gerhard.wunder@fu-berlin.de}
\thanks{Both authors are members of the Heisenberg Communication and Information Theory Group at Freie Universität Berlin}
}

\markboth{Journal of \LaTeX\ Class Files,~Vol.~14, No.~8, August~2015}%
{Shell \MakeLowercase{\textit{et al.}}: Bare Demo of IEEEtran.cls for IEEE Journals}

\maketitle

\begin{abstract}
In a batch of synchronized queues, customers can only be serviced all at once or not at all, implying that service remains idle if at least one queue is empty.
We propose that a batch of $n$ synchronized queues in a discrete-time setting is quasi-stable for $n \in \{2,3\}$ and unstable for $n \geq 4$.  A correspondence between such systems and a random-walk-like discrete-time Markov chain (DTMC), which operates on a quotient space of the original state-space, is derived.
Using this relation, we prove the proposition by showing that the DTMC is transient for $n \geq 4$ and null-recurrent (hence quasi-stability) for $n \in \{2,3\}$ via evaluating infinite power sums over skewed binomial coefficients.

Ignoring the special structure of the quotient space, the proposition can be interpreted as a result of Pólya's theorem on random walks, since the dimension of said space is $d-1$.
\end{abstract}

\begin{IEEEkeywords}
Synchronized Queues, Binomial Coefficients, Power Sums
\end{IEEEkeywords}

\IEEEpeerreviewmaketitle


\section{Introduction}

Conventional queueing networks, consisting of queue-server \textit{pairs}, are a well investigated system class. In those networks, each queue possesses its individual server, which handles the customers from exclusively that queue. In contrast, there exists the notion of \textit{paired} or \textit{synchronized} queues, in which a single server is responsible for multiple queues in such a way that service can only take place if a customer from each queue is present, as illustrated in \figref{fig::batch}. Synchronized queues find application in assembly lines \cite{Harrison1973, DeCuypere2014}, parallel computing \cite{Olvera-Cravioto2014} and matchmaking scenarios \cite{Gurvich2014}.

It is known that a single batch of synchronized queues with i.i.d. arrivals in a \textit{continuous-time} setting is unstable in the sense that the backlog does not converge to a stationary distribution \cite{Harrison1973}. For the special case of 2 synchronized queues, this was further investigated by \cite{Latouche1981}, who found the difference in the queues, the so-called excess, to be the deciding quantity implying instability. \cite{Latouche1981} also made a connection to null-recurrent and transient behavior of a corresponding Markov process.

\begin{figure}
	\centering
	\def\svgwidth{\linewidth}
\begingroup%
  \makeatletter%
  \providecommand\color[2][]{%
    \errmessage{(Inkscape) Color is used for the text in Inkscape, but the package 'color.sty' is not loaded}%
    \renewcommand\color[2][]{}%
  }%
  \providecommand\transparent[1]{%
    \errmessage{(Inkscape) Transparency is used (non-zero) for the text in Inkscape, but the package 'transparent.sty' is not loaded}%
    \renewcommand\transparent[1]{}%
  }%
  \providecommand\rotatebox[2]{#2}%
  \newcommand*\fsize{\dimexpr\f@size pt\relax}%
  \newcommand*\lineheight[1]{\fontsize{\fsize}{#1\fsize}\selectfont}%
  \ifx\svgwidth\undefined%
    \setlength{\unitlength}{249.4488189bp}%
    \ifx\svgscale\undefined%
      \relax%
    \else%
      \setlength{\unitlength}{\unitlength * \real{\svgscale}}%
    \fi%
  \else%
    \setlength{\unitlength}{\svgwidth}%
  \fi%
  \global\let\svgwidth\undefined%
  \global\let\svgscale\undefined%
  \makeatother%
  \begin{picture}(1,0.55681814)%
    \lineheight{1}%
    \setlength\tabcolsep{0pt}%
    \put(0,0){\includegraphics[width=\unitlength,page=1]{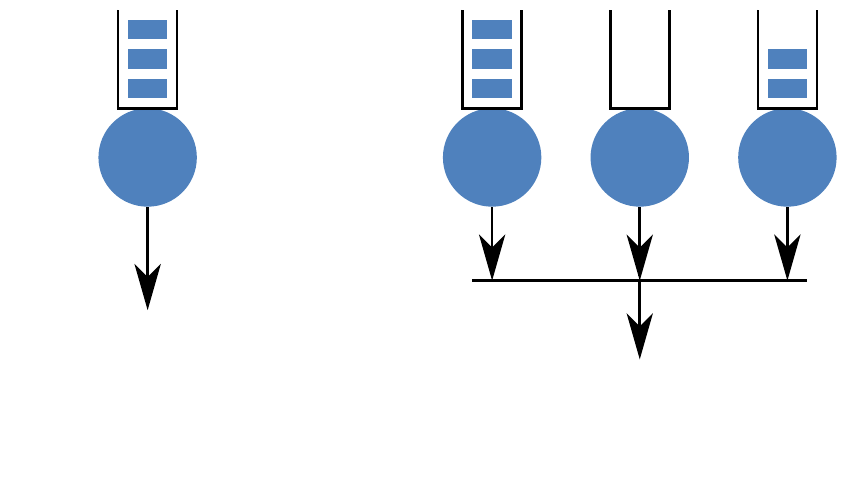}}%
    \put(0.17045453,0.14772723){\color[rgb]{0,0,0}\makebox(0,0)[t]{\lineheight{1.25}\smash{\begin{tabular}[t]{c}conventional qeue\end{tabular}}}}%
    \put(0.21712511,-0.61184304){\color[rgb]{0,0,0}\makebox(0,0)[lt]{\begin{minipage}{1.50322099\unitlength}\raggedright \end{minipage}}}%
    \put(0.54657218,0.12224497){\color[rgb]{0,0,0}\makebox(0,0)[lt]{\begin{minipage}{0.38412894\unitlength}\centering batch of 3 synchronized queues\end{minipage}}}%
    \put(0,0){\includegraphics[width=\unitlength,page=2]{batch_of_synced_queues.pdf}}%
  \end{picture}%
\endgroup%

	\caption{Graphical illustration of a batch of 3 sync. queues on the RHS; customers can not be served because the center queue is empty.}
	\label{fig::batch}
\end{figure}

However, all those results consider a batch of synchronized queues on its own and never in a network context. In networks, multiple batches might be joint together with conventional queues and a network controller has to allocate scarce resources (e.g. manpower) to each server in order to facilitate their operation. It is our goal to reach such a union and this paper presents a small, but in itself closed step towards it. In some sense, we extend the results from \cite{Latouche1981} to the multidimensional case and to a \textit{discrete-time} setting, which is our preferred setting to model queueing networks. Compared to the findings of \cite{Harrison1973}, we yield a more precise characterization of the process that is responsible for the divergence of the backlog. We show that this process is the vectorial excess and evolves on a specific quotient space whose equivalence classes are taken from the general state-space. This process resembles a random walk, can be analyzed as such, and will imply null-recurrent/quasi-stable or transient/unstable behavior, based on the number of queues in the batch.

At this point, there exists a rigorous correspondence with the infinite power sums over binomial coefficients: as we will show, the convergence/divergence of such series is equal to transient/non-transient (recurrent and null-recurrent) behavior of the excess process. Hence, this paper will investigate convergence properties of infinite power sums over skewed binomial coefficients.
Unfortunately, there are only scarce results on powers of binomial coefficients. A common approximation can be derived using the central limit theorem and yields
\begin{equation}
	\label{eq::approx}
	\binom{n}{k}\frac{1}{2^n} \sim \frac{ e^{-\frac{2}{n} \left( x-\frac{n}{2} \right)^2 } }
	{\sqrt{ \pi \frac{n}{2} }} 
	\  \ \rightarrow \ \ 
	\sum_{k=0}^n \binom{n}{k}^d \sim \frac{2^{dn}}{\sqrt{d}} \left( \frac{2}{\pi n} \right)^{\frac{d-1}{2}}
\end{equation}
However, the estimate on the LHS only holds for $k \sim \frac{n}{2}$ (where $\frac{n}{2}$ can be replaced with the mean of the normal distribution to yield a more general expression) and, for the sum on the RHS, any values with $k \gg \frac{n}{2}$ or $k \ll \frac{n}{2}$ are assumed to be negligible. Even more, for values close to $\frac{n}{2}$ the approximation error of a binomial coefficient via the central limit theorem as well as via Stirling's formula still is in the order $\mathcal{O}(\frac{1}{\sqrt{n}})$ (which follows e.g. from the Berry–Esseen theorem). This renders \eqref{eq::approx} inadequate for our investigation in the convergence of the infinite sum over $n$, since the sum over the estimation error itself could possibly be the cause for divergence. In contrast, \cite{Cusick1989} derives a method to obtain an exact recurrence relation between the binomial coefficients of arbitrary power $d$. However, this is only of limited use to us, because on one hand, our binomial coefficients will be "skewed" with powers of certain probabilities resulting from the term $\binom{n}{k}p^k q^{n-k}$, and on the other hand, these recurrences span over several past members of their sequences which makes it difficult to extract any information about convergence/divergence properties of the corresponding series.

This investigation is structured as follows: after introducing the motivating problem, we develop the correspondence between a batch of $d$ synchronized queues and a random-walk-like stochastic process. In the last part, we state and proof 2 theorems according to which the batch/stochastic process is quasi-stable/null-recurrent for $d \in \{2,3\}$ and unstable/transient for $d \geq 4$.

\section{System Model}

In the context of discrete-time queueing models, we can express a single batch of synchronized queues via the evolution of its queue state. If the batch consist of $d$ queues ($d$imensions), then the state vector will be $q_t \in \mathbb{N}^d$, where $t$ designates the time slot. Its evolution follows
\begin{equation}
	\label{eq::evolution}
	q_{t+1} = q_t - \mathbf{1} m_t v_t + a_t
\end{equation}
The tailing term represents the arrival process with $a_t \in \mathbb{N}^d$, which we will assume to be the vector of $d$ i.i.d. Bernoulli processes with parameter $p$. I.e. in each time slot and each queue, the probability of exactly one customer arriving is $p$. While the arrival represents the influx, the middle term stands for the efflux. It is $\mathbf{1}$ the vector of ones (with dimension $d$) and $v_t \in \{0,1\}$ the control vector. If $q_t \geq \mathbf{1}$, $v_t$ can be activated (set to 1), which will subtract $\mathbf{1}$ from $q_t$ in accordance with the evolution. As an additional complication, inherited from the usual network context, $(m_t)$ is a Bernoulli process with parameter $\e{m}$ that disturbs the control. Hence, even if $v_t$ is active, $m_t$ could be zero and render the control action effectless.

As mentioned, the control cannot be activated if at least one queue is empty, giving rise to the constraint
\begin{equation}
	\label{eq::constraint}
	\mathbf{1} v_t \leq q_t
\end{equation}
Obviously, an optimal control strategy activates $v_t$ whenever possible. The maximal efflux resulting from such a policy would be $\mathbf{1} \e{m}$. Hence, we must assume that $p < \e{m}$, since otherwise the influx $a_t$ would on average be greater than the efflux $-\mathbf{1} m_t v_t$, rendering the system unstable per construction. For what follows, we assume such an optimal strategy to be active.
The main contribution of this paper is the following theorem:

\begin{theorem}
\label{th::one}
A batch of $d$ coupled queues, set up as described earlier, is quasi-stable (null-recurrent) for $d \in \{2,3\}$ and unstable (transient) for $d \geq 4$.
\end{theorem}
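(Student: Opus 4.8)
The plan is to reduce the $d$-dimensional queueing chain to a genuine random walk and then to decide its recurrence type by an \emph{exact} evaluation of the associated power sum, rather than through the lossy central-limit estimate \eqref{eq::approx}. First I would formalise the excess process. Since the service term $\mathbf{1} m_t v_t$ is always an integer multiple of $\mathbf{1}$, it vanishes under the projection onto the quotient $\mathbb{Z}^d/\mathbb{Z}\mathbf{1}\cong\mathbb{Z}^{d-1}$, so the projected state $\e{q}_t$ evolves purely through the arrivals, $\e{q}_{t+1}=\e{q}_t+\e{a}_t$, \emph{irrespective} of whether the control is active or blocked; the nonnegativity of $q_t$ constrains only the representative and not the quotient element, every class possessing a nonnegative representative. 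Hence $\e{q}_t$ is a homogeneous random walk on $\mathbb{Z}^{d-1}$ whose increment is the image of a Bernoulli($p$) vector. Because every coordinate of $a_t$ shares the same mean $p$, this increment has zero drift, $\E{\e{a}_t}=0$, and its support generates the whole lattice. Invoking the correspondence between the batch and this walk, quasi-stability/instability of the batch is equivalent to null-recurrence/transience of $\e{q}_t$, so the task reduces to deciding the recurrence type of a driftless, genuinely $(d-1)$-dimensional lattice walk.

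Pólya's dichotomy already predicts the answer (recurrent for $d-1\le 2$, transient for $d-1\ge 3$), but to obtain it rigorously and to separate null- from positive-recurrence I would compute the Green's function at the origin directly. Starting from the balanced class $\e{q}_0=0$, the walk sits at the origin at time $t$ exactly when the cumulative arrivals in every queue coincide, $S_{t,1}=\dots=S_{t,d}$ with $S_{t,i}=\sum_{s<t}a_{s,i}$. As the $S_{t,i}$ are independent Binomial($t,p$) variables, the return probability factorises into the skewed power sum announced in the introduction,
\begin{equation}
\label{eq::return}
u_t = \CP{\e{q}_t=0}{\e{q}_0=0} = \sum_{k=0}^{t} \binom{t}{k}^{d} \bigl(p^{k}(1-p)^{t-k}\bigr)^{d}.
\end{equation}
The walk is recurrent iff the Green's function $G=\sum_{t\ge 0}u_t$ diverges, and since the walk is driftless, recurrence automatically entails null-recurrence, a nontrivial zero-mean lattice walk admitting no stationary probability measure. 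The entire theorem therefore collapses to deciding convergence of $\sum_t u_t$.

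The main obstacle is precisely this convergence analysis, and it is what makes the crude estimate \eqref{eq::approx} insufficient. I expect $u_t\sim C_d\,t^{-(d-1)/2}$, so that $\sum_t u_t$ converges iff $(d-1)/2>1$, i.e. iff $d\ge 4$, and diverges for $d\in\{2,3\}$; the borderline case $d=3$ yields $u_t\sim C_3\,t^{-1}$ with a logarithmically divergent sum, so the exponent and the leading constant must be pinned down with an explicitly controlled remainder rather than an $\mathcal{O}(1/\sqrt{n})$ one. The hard part is thus to extract sharp, summable asymptotics for the inner power sum $\sum_k \binom{t}{k}^{d}p^{dk}(1-p)^{d(t-k)}$. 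I would obtain them from a saddle-point/Laplace analysis concentrated at $k\approx pt$ (equivalently, an exact recurrence or integral representation of the power sum in the spirit of \cite{Cusick1989}, but carrying the weights $p^{k}(1-p)^{t-k}$), isolate the $t^{-(d-1)/2}$ factor, and bound the remainder uniformly in $t$ so that the summation over $t$ is justified. Establishing this precise decay, and in particular deciding the critical case $d=3$ against $d=4$, is the crux on which both halves of the theorem turn.
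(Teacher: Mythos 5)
Your reduction coincides with the paper's: you project onto $\mathbb{Z}^d/\langle\mathbf{1}\rangle$, note that the service term $\mathbf{1}m_tv_t$ dies under the projection, identify the return probability with the skewed power sum $R_d(n)$ of \eqref{eq::inf_sum}, and invoke the Green's-function criterion for transience. Up to that point the two arguments are the same (and two of your side remarks are actually cleaner than the paper's: ruling out positive recurrence because the counting measure is an invariant, non-normalizable measure for a recurrent lattice walk is tidier than the paper's comparison of return times with one-dimensional simple random walks with idle slots). Where the routes genuinely part is in deciding convergence of $\sum_n R_d(n)$. The paper never establishes sharp asymptotics: for $d\geq4$ it combines Stirling's bound on the largest row element, $\hat P(n)\lesssim n^{-1/2}$, with the elementary inequality $\sum_i x_i^d\leq \hat x^{d-1}$ of Lemma \ref{lemma::sum}; for $d=2$ it uses the lower bound $\sum_i x_i^2\geq n\e{x}^2$; and for the critical case $d=3$ it first reduces to $p=\tfrac12$ by convexity of $R_d(n)$ in $p$ and then abandons the series altogether, proving non-transience of $(\qp_t)$ directly via Kendall's criterion \eqref{eq::kendall_cond} with the Lyapunov function $f(q)=\ln\ln(e+\rho(q))$. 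You instead propose one unified estimate $R_d(n)\sim C_d\,n^{-(d-1)/2}$ from a saddle-point/local-CLT analysis, which, if proved, settles all cases at once and with sharper information (exact decay rates and constants, no case split, no reduction to $p=\tfrac12$).

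The gap is that this decisive step is announced, not executed. Your plan is viable: since $R_d(n)=\P{X_1=\cdots=X_d}$ for i.i.d. Binomial$(n,p)$ variables $X_i$, a local limit theorem with \emph{multiplicative} error in the central region $|k-np|=O(\sqrt{n\log n})$, together with Chernoff bounds showing the tail terms are exponentially negligible, yields exactly the claimed decay; the paper's objection to \eqref{eq::approx} (an additive per-term error of order $\mathcal{O}(1/\sqrt{n})$) does not apply to such a relative-error argument, though you would need to say this explicitly and do the bookkeeping. But as written, the asymptotics, the uniform remainder control, and in particular the case $d=3$ — where the entire theorem hinges on distinguishing $n^{-1}$ decay from summable decay — remain a program ("I would obtain them from a saddle-point analysis"), and you concede it is the crux. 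So the proposal is a correct strategy whose hardest step is missing; the paper's Kendall-function argument is precisely the device by which it avoids ever owing that step. A lesser omission: you assert the equivalence between stability of the batch and the recurrence type of the excess walk, but concluding null-recurrence of the full chain $(q_t)$ still requires recombining $(\qp_t)$ with the diagonal component $(\qc_t)$, which is positive recurrent by $p<\e{m}$; the paper does this at the end of Section \ref{sec::proof}.
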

\begin{proof}
	See sections \ref{sec::correspondence} and \ref{sec::proof}.
\end{proof}

\section{Corresponding Random Walk}

\label{sec::correspondence}

Though we defined $q_t \in \mathbb{N}^d$, we will define the state-space to be $\mathbb{Z}^d$ to obtain a vector space. Since evolution \eqref{eq::evolution} and constraint \eqref{eq::constraint} force $q_t$ to be positive, this does not impact the system at all. If the employed policy only takes into account the current system states (which is prudent to assume), then $(q_t)$ is a discrete-time Markov chain (DTMC). It is readily verified that $(q_t)$ can be segregated into two distinct processes, each of which is again a DTMC: a process perpendicular to the main diagonal of the state space, $(\qp_t)$, and a process parallel to the main diagonal, $(\qc_t)$.
\begin{equation}
\begin{gathered}
	q_{t+1} = \mathbf{1} \qc_{t+1} + \qp_{t+1}
	\\[1ex]
	\begin{aligned}
	\qc_{t+1} &:= q_{t+1} \operatorname{div} \mathbf{1} &&= \left( \qc_t + a_t \right) \operatorname{div} \mathbf{1} - m_t v_t
	\\[1ex]
	\qp_{t+1} &:= q_{t+1} \operatorname{mod} \mathbf{1} &&= \left( \qp_t + a_t \right) \operatorname{mod} \mathbf{1}
	\end{aligned}
\end{gathered}
\end{equation}
We will focus on the process $(\qp_t)$, which represents the multidimensional excess of customers, relative to the lowest queue. Crucially, $(\qp_t)$ cannot be influenced by the control.

Define $\langle \mathbf{1} \rangle := \Set{ x = \mathbf{1} \cdot k , \ k \in \mathbb{Z} }$ which is a vector subspace of $\mathbb{Z}^d$. Then we can express the state space of $(\qp_t)$ as the quotient space $\mathbb{Z}^d \slash \langle \mathbf{1} \rangle$ due to the modulo operator. Furthermore, $(\qp_t)$ behaves nearly like a random walk on ${\mathbb{Z}^d \slash \langle \mathbf{1} \rangle}$: because of $(a_t)$, the probability of an increment in each dimension is $p$. However, there are no direct decrements. Instead, based on the properties of the quotient space, a decrement in any dimension corresponds to an increment in all, except that dimension:
\begin{equation}
	\begin{pmatrix}
		-1 & 0 & \dots & 0	
	\end{pmatrix}^\intercal
	\equiv
	\begin{pmatrix}
		0 & 1 & \dots & 1	
	\end{pmatrix}^\intercal
	\mod \mathbf{1}
\end{equation}

The question of interest is, whether the DTMC $(\qp_t)$ is \textit{transient}, \textit{null-recurrent} or even \textit{recurrent}. For that purpose, let $R_d(n)$ be the probability that the process $(\qp_t)$ has $R$eturned to wherever it started after $n$ time steps: $R_d(n) := \CP{\qp_{t+n} = q^*}{\qp_t = q^*}$. Note that the original process $(q_t)$ has $d$ dimensions, while $(\qp_t)$ has $d-1$. From the theory of random walks, it is well known that the DTMC is transient if $\sum_{n=0}^\infty R_d(n)$ converges, and non-transient if the sum diverges. (This infinite sum expresses, how often the walk will return to its origin, on average.)

In our set-up, the DTMC will return to its origin after $n$ steps if every dimension experienced the same amount of increments in these $n$ time slots. Let $k$ be that amount. Then we can express the probability of every dimension experiencing $k$ increments in $n$ time slots as
\begin{equation}
	P_{n,k}^d = \left[ \binom{n}{k} p^k \left( 1-p \right)^{n-k} \right]^d
\end{equation}
(where the upper $d$ expresses the $d$-th power). Summing this over all possible $k$ yields
\begin{equation}
	\label{eq::P_def}
	R_d(n) 
	=
	\sum_{k=0}^n P_{n,k}^d
	=
	\sum_{k=0}^n \left[ \binom{n}{k} p^k \left( 1-p \right)^{n-k} \right]^d
\end{equation}

In \figref{fig::partial}, we illustrated the \textit{inverse} of $R_d(n)$ for the first few terms of the series $\sum_{n=0}^\infty R_d(n)$, when $p=\frac{1}{2}$. Intuitively, the terms for $d=2$ resemble a root-like function, implying that the sum should diverge ($\sum \frac{1}{\sqrt{n}} \to \infty$). In the same fashion, $d\geq 4$ seems to resemble a simple polynomial, which implies convergence ($\sum \frac{1}{n^2} \to \frac{\pi^2}{6}$). The case $d=3$ is problematic: it is not exactly a linear behavior, though it closely resembles one, leaving us uncertain of its convergence properties. Keep in mind, that it stands to question if these preliminary observations remain robust regarding different values of $p$.

\begin{figure}
	\centering
	\def\svgwidth{\linewidth}
	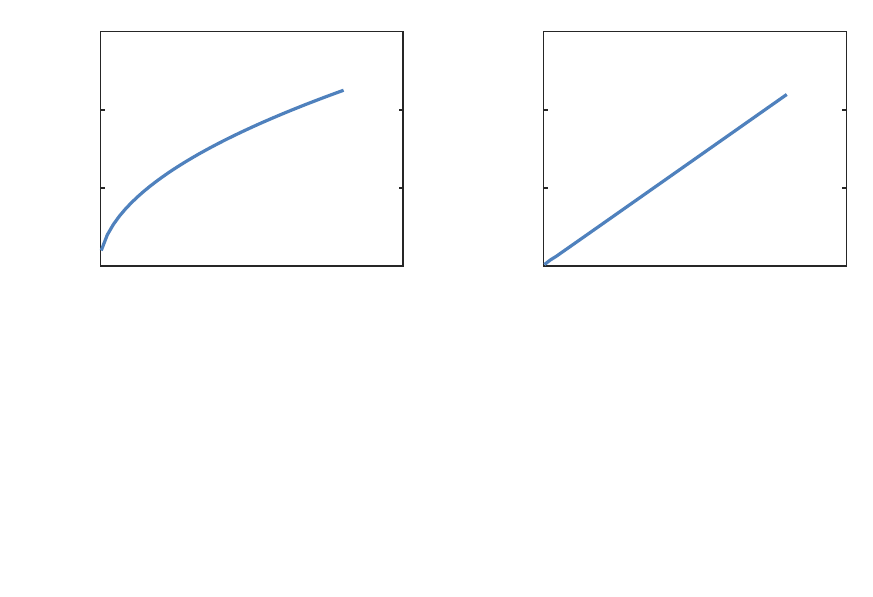
	\caption{Inverse values of $R_d(n)$ for $n=0 \dots 40$ to illustrate how fast the terms tends to 0.}
	\label{fig::partial}
\end{figure}

As an important result from that correspondence, we can reformulate Theorem~\ref{th::one}:

\begin{theorem}
For $p \in (0,1)$ and $\np = 1 - p$ the series
\begin{equation}
	\label{eq::inf_sum}
	\sum_{n=0}^\infty 
	R_d(n)
	=
	\sum_{n=0}^\infty 
	\sum_{k=0}^n
	P_{n,k}^d
	=
	\sum_{n=0}^\infty 
	\sum_{k=0}^n
	\left[
		\binom{n}{k} p^k \np^{n-k}
	\right]^d
\end{equation}
diverges for $d \in \{2,3\}$ and converges for $d \geq 4$.
\end{theorem}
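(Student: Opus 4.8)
The plan is to pin down the \emph{polynomial order} of the general term, namely $R_d(n) \asymp n^{-(d-1)/2}$, after which the claim is immediate from the integral test: the series $\sum_n n^{-(d-1)/2}$ diverges exactly when $(d-1)/2 \le 1$, i.e. when $d \le 3$, and converges when $d \ge 4$. Throughout, write $b_{n,k} = \binom{n}{k} p^k \np^{n-k}$ for the binomial pmf, so that $\sum_{k=0}^n b_{n,k} = 1$ and $R_d(n) = \sum_{k=0}^n b_{n,k}^d$. Rather than evaluate the sum exactly, I would establish the two one-sided bounds $R_d(n) \le C_d\, n^{-(d-1)/2}$ (used for $d \ge 4$) and $R_d(n) \ge c_d\, n^{-(d-1)/2}$ (used for $d \in \{2,3\}$), each with an explicit positive constant.

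For the convergence half ($d \ge 4$) the upper bound is clean and robust. Since the terms sum to one, factoring out $d-1$ copies of the largest term gives
\[
R_d(n) = \sum_{k=0}^n b_{n,k}^d \le \Big( \max_{0 \le k \le n} b_{n,k} \Big)^{d-1} \sum_{k=0}^n b_{n,k} = \Big( \max_{0 \le k \le n} b_{n,k} \Big)^{d-1}.
\]
The mode $k^\ast$ sits near $np$, and a Stirling estimate of $b_{n,k^\ast}$ yields $\max_k b_{n,k} \le C/\sqrt{n}$ with $C$ depending only on $p$. Hence $R_d(n) \le C^{d-1} n^{-(d-1)/2}$, and since $(d-1)/2 \ge 3/2 > 1$ for $d \ge 4$, the series converges by comparison with $\sum_n n^{-(d-1)/2}$.

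For the divergence half ($d \in \{2,3\}$) I would prove a matching lower bound by exploiting concentration. The aim is to produce constants $c,\alpha>0$ with $b_{n,k} \ge c/\sqrt{n}$ for every integer $k$ in the window $[\,np-\alpha\sqrt n,\ np+\alpha\sqrt n\,]$, which contains at least $\alpha\sqrt n$ integers. Restricting the sum to this window then gives
\[
R_d(n) = \sum_{k=0}^n b_{n,k}^d \ \ge\ (\alpha\sqrt n)\Big(\frac{c}{\sqrt n}\Big)^d = \alpha c^d\, n^{-(d-1)/2}.
\]
For $d=2$ this is a constant times $n^{-1/2}$ and for $d=3$ a constant times $n^{-1}$, so both corresponding series diverge (the $d=3$ case being exactly the harmonic series). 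The envelope $b_{n,k} \ge c/\sqrt n$ on the window follows from a two-sided Stirling expansion of $b_{n,k}$, which on the $O(\sqrt n)$ scale is Gaussian-like, $b_{n,k}\approx (2\pi n p\np)^{-1/2}\exp(-(k-np)^2/(2 n p\np))$, so that one may take $c = (2\pi p\np)^{-1/2}\exp(-\alpha^2/(2p\np))$.

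The main obstacle is precisely the critical case $d=3$, which lands on the boundary between convergence and divergence, so a merely heuristic Gaussian approximation does not suffice. As the paper already stresses, the central-limit/Stirling approximation of $b_{n,k}$ carries an $O(1/\sqrt n)$ relative error; summed over $n$ this error is itself of harmonic order, i.e. of the same order as the quantity we are trying to decide, so an asymptotic ``$\sim$'' cannot settle $d=3$ on its own. The remedy is to replace the heuristic by non-asymptotic two-sided Stirling bounds such as $\sqrt{2\pi n}\,(n/e)^n e^{1/(12n+1)} \le n! \le \sqrt{2\pi n}\,(n/e)^n e^{1/(12n)}$ and to carry all constants explicitly, thereby certifying that $c$ in the window bound is genuinely positive and uniform in $n$. (For $d=2$ one can sidestep Stirling entirely via the exact representation $R_2(n)=\frac{1}{2\pi}\int_{-\pi}^{\pi}(1-4p\np\sin^2(\theta/2))^n\,d\theta$, whose integrand is explicit; but $d=3$ still appears to require the windowed lower bound.) By contrast, the convergence direction needs only the one-sided bound on the mode and is comparatively effortless.
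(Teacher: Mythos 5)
Your proposal is correct, and on the divergence side it takes a genuinely different route from the paper. For $d \geq 4$ the two arguments coincide: the paper also factors out $d-1$ copies of the maximal term (its Lemma~\ref{lemma::sum}) and bounds the mode by $O(1/\sqrt{n})$ via Stirling, exactly as you do. The divergence cases are where you part ways. The paper splits them: for $d=2$ it uses the power-mean inequality $\sum_k P_{n,k}^2 \geq (n+1)\cdot\frac{1}{(n+1)^2} = \frac{1}{n+1}$, which is lossy (the true order is $n^{-1/2}$) but sufficient; that device is useless for $d=3$, where it only yields the summable bound $(n+1)^{-2}$. For $d=3$ the paper therefore abandons series estimates altogether: it first reduces to the symmetric case $p=\frac{1}{2}$ by a convexity-in-$p$ argument, then proves non-transience of the excess chain $(\qp_t)$ via Kendall's drift criterion with the Lyapunov function $f(q)=\ln\ln(e+\rho(q))$, and recovers divergence of the series only through the series/transience correspondence. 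Your windowed local-limit lower bound $R_d(n) \geq \alpha c^d\, n^{-(d-1)/2}$ handles $d=2$ and $d=3$ uniformly, for every $p$, with no symmetrization and no Markov-chain machinery, and it pins down the sharp order of $R_d(n)$ for all $d$. Your diagnosis of why this evades the paper's stated worry about $O(1/\sqrt{n})$ approximation errors is exactly right: for divergence one needs only a one-sided bound with \emph{some} uniform positive constant, and two-sided non-asymptotic Stirling bounds deliver it, since on the window the cubic correction in the exponent is $O(|k-np|^3/n^2)=O(n^{-1/2})$ and hence costs only a bounded multiplicative factor; your exact Fourier identity for $R_2(n)$ (the return probability of the difference of two i.i.d.\ binomials) is also correct. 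The one caveat is that the key envelope $b_{n,k}\geq c/\sqrt{n}$ on the window is sketched rather than executed, but this is a standard local-limit-theorem computation, a matter of writing rather than substance. In terms of what each approach buys: yours is self-contained, sharper, and proves the theorem as stated entirely within the series formulation; the paper's Kendall route avoids all delicate estimates near the mode, at the price of importing drift-criterion machinery, the reduction to $p=\frac{1}{2}$, and the probabilistic correspondence as a load-bearing part of the proof.
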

\begin{proof}
	See section \ref{sec::proof}.
\end{proof}

\section{Proof of Main Theorem}
\label{sec::proof}

We identify $P_{n,k}^d$ with a position in Pascal's triangle: $n$ denotes the row and $k$ the column. (Especially for $p=\frac{1}{2}$ the sum \eqref{eq::inf_sum} runs over the elements of the normalized triangle, taken to the power of $d$.) For ease of notation we will use the identity $\np = 1-p $ and will treat $P_{n,k}^d$ as a function of $p$ if it suits us.
Using Sterling's approximation for the factorial, we find an upper bound for the largest terms in each row $n$, allowing us to prove convergence for $d \geq 4$. For $d = 2$ we find a lower bound that diverges. For the difficult case $d=3$ we employ a theorem by Kendall.

\begin{lemma}
\label{lemma::sum}
Let there be a set of elements $\mathcal{X} = \Set{ x_1,x_2,\dots x_n }$ with $x_i \in \mathbb{R}_+$ and  $\sum_{\mathcal{X}} x_i = 1$. Let us denote the maximal value with $\hat{x} = \max_{\mathcal{X}} x_i$ and the average with $\e{x} = \frac{1}{n} \sum_\mathcal{X} x_i$. Then it holds for each $d \in \mathbb{N}, d \geq 4$ that
\begin{equation}
	\sum_{i=1}^n x_i^d \leq \hat{x}^{d-1}
	\qquad \text{and} \qquad
	\sum_{i=1}^n x_i^2 \geq n \e{x}^2
\end{equation}
\end{lemma}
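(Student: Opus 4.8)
For the first inequality, the plan is to bound each power by the maximum and then exploit the normalization. Since $0 \le x_i \le \hat{x}$ and $d - 1 \ge 1$ (the hypothesis $d \ge 4$ gives ample room, but $d \ge 2$ already suffices), monotonicity of $t \mapsto t^{d-1}$ yields $x_i^{d-1} \le \hat{x}^{d-1}$ for every $i$. Writing $x_i^d = x_i^{d-1}\, x_i$ and summing, I would obtain
\begin{equation*}
	\sum_{i=1}^n x_i^d = \sum_{i=1}^n x_i^{d-1}\, x_i \le \hat{x}^{d-1} \sum_{i=1}^n x_i = \hat{x}^{d-1},
\end{equation*}
where the last equality is the constraint $\sum_{\mathcal{X}} x_i = 1$. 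I would remark that this argument never genuinely needs $d \ge 4$; the restriction is inherited purely from the regime in which the bound is later invoked to prove convergence of \eqref{eq::inf_sum}.

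For the second inequality, I would recognize the claim as the nonnegativity of the (unnormalized) sample variance, or equivalently as a direct instance of the Cauchy--Schwarz inequality. Expanding $\sum_{i=1}^n (x_i - \e{x})^2 \ge 0$ and rearranging gives $\sum_{i=1}^n x_i^2 \ge n\, \e{x}^2$ immediately. Alternatively, Cauchy--Schwarz applied to the vectors $(x_1,\dots,x_n)$ and $(1,\dots,1)$ yields $\left( \sum_{i=1}^n x_i \right)^2 \le n \sum_{i=1}^n x_i^2$, so that $\sum_{i=1}^n x_i^2 \ge \tfrac{1}{n}\left( \sum_{i=1}^n x_i \right)^2 = n\, \e{x}^2$, using $\e{x} = \tfrac{1}{n}\sum_{i=1}^n x_i$ in the final step. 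Under the normalization this simply reads $\sum_i x_i^2 \ge \tfrac{1}{n}$, which is the form that will later lower-bound the $d=2$ series.

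I do not expect any substantive obstacle, as both estimates are elementary. The only care required is bookkeeping: ensuring the normalization $\sum_{\mathcal{X}} x_i = 1$ (hence $\e{x} = \tfrac{1}{n}$) is applied consistently, and that each bound is stated in exactly the shape needed to be substituted into the row-by-row estimates of $P_{n,k}^d$ in the main proof, where $\hat{x}$ will correspond to the central binomial term and $\e{x}$ to the row average.
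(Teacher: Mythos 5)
Your proposal is correct and matches the paper's own proof essentially verbatim: the first bound via $x_i^d = x_i^{d-1}x_i \le \hat{x}^{d-1}x_i$ and normalization, and the second via nonnegativity of the variance term (the paper expands $\sum_i (\e{x} + [x_i - \e{x}])^2$, which is the same computation you describe). Your side remarks — that $d \ge 4$ is not actually needed for the inequality itself, and that Cauchy--Schwarz gives an equivalent route to the second bound — are accurate but do not change the argument.
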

\begin{proof}
It is
\begin{equation}
	\sum_{i=1}^n x_i^d \leq \sum_{i=1}^n x_i \hat{x}^{d-1} = \hat{x}^{d-1}
\end{equation}
and
\begin{equation}
\begin{aligned}
	\sum_{i=1}^n x_i^2
	&=
	\sum_{i=1}^n ( \e{x} + [ x_i - \e{x} ] )^2
	\\
	&=
	n \e{x}^2 + 
	\underbrace{ 2\e{x} \sum_{i=1}^n [ x_i - \e{x} ]
	}_{\displaystyle =0}	
	+ \sum_{i=1}^n [ x_i - \e{x} ]^2
	\geq
	n \e{x}^2 
\end{aligned}
\end{equation}
\end{proof}

\subsection{Case $d \geq 4$}

For $d=1$ we will write $P_{n,k}$ instead of $P_{n,k}^1$ and have
\begin{equation}
	\sum_{k=0}^n P_{n,k} = 1 \qquad \forall n \in \mathbb{N}, \  \forall p\in(0,1)
\end{equation}
The terms in row $n$, which are $P_{n,1},P_{n,2} ,\dots P_{n,n}$, follow a binomial distribution, which exhibits at best 2 maxima. Denote with $\hat{P}(n)$ the largest term in row $n$, and its position with $\hat{k}(n)$. It follows that $\hat{P}(n)$ is positioned wherever the quotient of two consecutive terms is greater or equal $\{1\}$ for the first time (starting on the left):
\begin{equation}
	\frac{	\displaystyle
		\binom{n}{\hat{k}} p^{\hat{k}} \np^{n-\hat{k}} 
	}{  \displaystyle
		\binom{n}{\hat{k}+1} p^{\hat{k}+1} \np^{n-\hat{k}-1} 
	}
	\geq 1
	\quad \Longleftrightarrow \quad
	\hat{k}(n) \geq np-\np
\end{equation}
Due to the discreteness we can locate $\hat{k}(n)$ only in the interval $\hat{k}(n) = np+\delta$ with $-\np \leq \delta \leq p$. Note that choosing another value for $d$ will not change this position, since all $P_{n,k}$ are positive and will therefore only be scaled appropriately to their relative size.

Using Stirling's approximation $\sqrt{2\pi} n^{n+\frac{1}{2}} e^{-n} \leq n! \leq e n^{n+\frac{1}{2}} e^{-n}$ and some basic algebra, we get
\begin{multline}
	\hat{P}(n)
	=
	\binom{n}{np+\delta} p^{np+\delta} \np^{n \np -\delta}
	\\
	\leq
	\frac{e}{2\pi} \frac{1}{\sqrt{n p \np }}
	\left( 1+\frac{\delta}{n p} \right)^{-n p-\delta-\frac{1}{2}}
	\left( 1-\frac{\delta}{n \np} \right)^{-n \np +\delta-\frac{1}{2}}
\end{multline}
Regarding the tailing products, we can make the following estimation and find, for any given $\varepsilon \in \mathbb{R}_+$, an $N \in \mathbb{N}$ such that $\forall n \geq N$:
\begin{equation}
\begin{aligned}
    &\left( 1+ \frac{\delta}{np} \right)^{-np} \to e^{-\delta} &&\leq e^{\np} + \varepsilon
    \\
    &\left( 1+ \frac{\delta}{np} \right)^{-\delta} &&\leq 1
    \\
    &\left( 1+ \frac{\delta}{np} \right)^{-\frac{1}{2}} \to 1 &&\leq 1 + \varepsilon
\end{aligned}
\end{equation}
With slight abuse of notation concerning the values of $\varepsilon$ and $N$, this yields
\begin{equation}
	\hat{P}(n)
	\leq
	\frac{e^2 + \varepsilon}{2 \pi} \frac{1}{\sqrt{np \np}}
	\qquad
	\forall n \geq N
\end{equation}

Now, for any $d$ let $S_d$ denote the finite sum of the first $N-1$ rows, i.e.
\begin{equation}
	S_d = \sum_{n=0}^{N-1} \sum_{k=0}^n P_{n,k}^d
\end{equation}
Using lemma \ref{lemma::sum}, the entire series becomes
\begin{align}
    \nonumber
	\sum_{n=0}^{\infty} \sum_{k=0}^n
	P_{n,k}^d
	&=
	S_d + \sum_{n=N}^{\infty} \sum_{k=0}^n P_{n,k}^d
	\leq
	S_d + \sum_{n=N}^{\infty}  \hat{P}^{d-1}(n)
	\\
	&
	\leq
	S_d +  \left[ \frac{e^2+\varepsilon}{2 \pi \sqrt{p\np}} \right]^{d-1} \sum_{n=N}^\infty \frac{1}{n^{\frac{d-1}{2}}}
\end{align}
which converges for $d\geq 4$.

\subsection{Case $d=2$}

Since for $d=1$ every row adds up to one and every row consists of $n+1$ terms (resulting in an average value of $\frac{1}{n+1}$) using lemma \ref{lemma::sum} yields
\begin{equation}
	\sum_{n=0}^\infty \sum_{k=0}^n P_{n,k}^2
	\geq
	\sum_{n=0}^\infty \sum_{k=0}^n \frac{1}{(n+1)^2}
	>
	1+
	\sum_{n=1}^\infty \frac{n}{(n+n)^2}
\end{equation}
which diverges.

\subsection{Case $d=3$}

\begin{figure}
	\centering
	\def\svgwidth{88mm}
\begingroup%
  \makeatletter%
  \providecommand\color[2][]{%
    \errmessage{(Inkscape) Color is used for the text in Inkscape, but the package 'color.sty' is not loaded}%
    \renewcommand\color[2][]{}%
  }%
  \providecommand\transparent[1]{%
    \errmessage{(Inkscape) Transparency is used (non-zero) for the text in Inkscape, but the package 'transparent.sty' is not loaded}%
    \renewcommand\transparent[1]{}%
  }%
  \providecommand\rotatebox[2]{#2}%
  \newcommand*\fsize{\dimexpr\f@size pt\relax}%
  \newcommand*\lineheight[1]{\fontsize{\fsize}{#1\fsize}\selectfont}%
  \ifx\svgwidth\undefined%
    \setlength{\unitlength}{249.4488189bp}%
    \ifx\svgscale\undefined%
      \relax%
    \else%
      \setlength{\unitlength}{\unitlength * \real{\svgscale}}%
    \fi%
  \else%
    \setlength{\unitlength}{\svgwidth}%
  \fi%
  \global\let\svgwidth\undefined%
  \global\let\svgscale\undefined%
  \makeatother%
  \begin{picture}(1,0.60227273)%
    \lineheight{1}%
    \setlength\tabcolsep{0pt}%
    \put(0,0){\includegraphics[width=\unitlength,page=1]{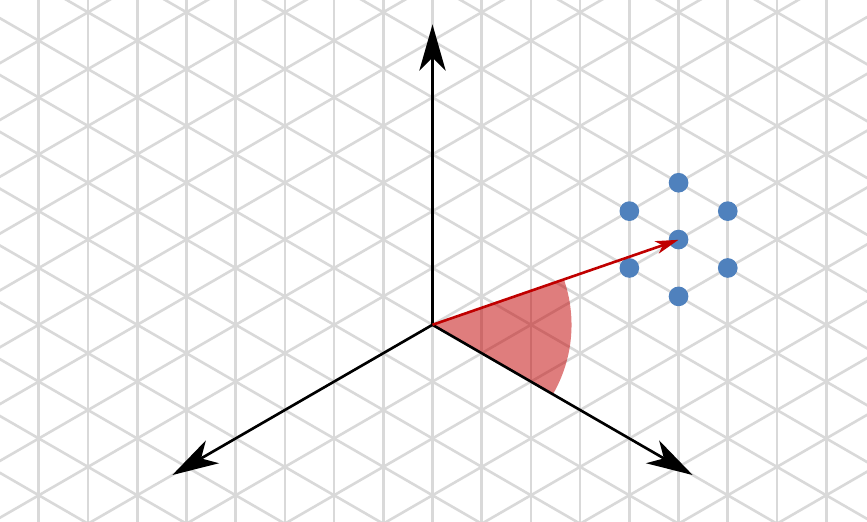}}%
    \put(0.6283503,0.31028331){\color[rgb]{0,0,0}\makebox(0,0)[lt]{\lineheight{1.25}\smash{\begin{tabular}[t]{l}$q$\end{tabular}}}}%
    \put(0.57195239,0.20811514){\color[rgb]{0,0,0}\makebox(0,0)[lt]{\lineheight{1.25}\smash{\begin{tabular}[t]{l}$\phi$\end{tabular}}}}%
  \end{picture}%
\endgroup%

	\caption{State space $\mathbb{Z}^3 \slash \langle \mathbf{1} \rangle$; red quantities describe a current queue state $\qp_t$, while blue points illustrate all possible states, that $\qp_{t+1}$ can take.}
	\label{fig::state_space}
\end{figure}

This case requires a much more intricate approach. In a first step we yield a lower bound on the series \eqref{eq::inf_sum} by setting $p=\np=\frac{1}{2}$. It is readily verified, that
\begin{equation}
\frac{d}{dp} R_d(n) = 0 \quad \text{and} \quad \frac{d^2}{dp^2} R_d(n) > 0 \qquad \text{if} \quad p=\frac{1}{2}
\end{equation}
and no other candidates for extreme points exist. Hence, it will suffice to show divergence in this symmetric case and for what follows it will be $p = \frac{1}{2}$.

We already mentioned that the series \eqref{eq::inf_sum} diverges, iff the DTMC $(\qp_t)$ is non-transient. We will now show the latter part by applying a theorem by Kendall \cite{Kendall1951}:
The DTMC $(\qp_t)$ with state-space $\mathcal{Q}$ is non-transient, iff there exists a function ${f: \mathcal{Q} \to \mathbb{R}_+}$ and a finite set $\mathcal{F} \subset \mathcal{Q}$ such that
\begin{equation}
    \label{eq::kendall_cond}
	\begin{aligned}
	\mathcal{Q}_K &:= \Set{
		q \in \mathcal{Q}:
		f(q) \leq K
	} = \text{finite for all $K$}
	\\
	\Delta f(q) &:= \CE{q_{t+1} - q_t}{q_t=q} \leq \infty \qquad \forall q \in \mathcal{Q}
	\\
	\Delta f(q) &:= \CE{q_{t+1} - q_t}{q_t=q} < 0 \qquad \forall q \in \mathcal{Q} \setminus \mathcal{F}
	\end{aligned}
\end{equation}
Next, we show that this theorem is fulfilled for the function
\begin{equation}
	f(q) = \ln \ln (e+\rho(q))
\end{equation}
with $\rho(q)$ being the squared distance from $q$ to the main diagonal of the original state space $\mathbb{Z}^3$:
\begin{equation}
	\rho(q) = q^\intercal q -  \frac{1}{3} \left( q^\intercal \mathbf{1} \right)^2
\end{equation}
It is readily checked that this function is well defined for the actual state space $\mathcal{Q} = \mathbb{Z}^3 \slash \langle \mathbf{1} \rangle$ of $(\qp_t)$ and fulfills the first condition in \eqref{eq::kendall_cond}.
The drift $\Delta f(q)$ is the expected change in $f$ during one evolution of $(\qp_t)$ and is independent of the control, since $(\qp_t)$ is as well. Because the entries in $(a_t)$ are from $\{0,1\}$, and by the virtue of $p=\frac{1}{2}$, the drift simply becomes
\begin{samepage}
\begin{align}
    \nonumber
	\Delta &f(q) =
	- f(q) + \frac{1}{8} \left[ f(q + e_1+e_2+e_3) + f(q) \right]
	\\
	&+ \frac{1}{8} \left[ f(q+e_1) + f(q+e_2) + f(q+e_3) \right]
	\\
	\nonumber
	&+ \frac{1}{8} \left[ f(q+e_1+e_2) + f(q+e_2+e_3) + f(q+e_3+e_1) \right]
\end{align}
\end{samepage}
with $e_i$ being the unit vector.

In our case, the fastest way to verify Kendall's theorem is via a geometrical interpretation of the state space. For $d=3$ we essentially move on a 2-dimensional plane with 3 axis as illustrated in \figref{fig::state_space}. Hence, we can identify $q$ with its radius $r$ and an angle $\phi$ shared with what was formerly the $(1,0,0)$ axis. In these coordinates (and using the "law of cosinus"), the drift becomes
\begin{align}
	\Delta f(q) &= -\frac{6}{8} \ln\ln \left( e + r^2 \right)
	\\
	\nonumber
	&+ \frac{1}{8} \sum_{m=0}^5
	\ln \ln \left(
		e + r^2 + 1 - 2r \cos(\phi + m \cdot  60^{\circ})
	\right)
\end{align}
Differentiating by $\phi$ yields maxima at $\phi = 30^\circ + z \cdot 60^\circ$, with $z \in \mathbb{Z}$, such that
\begin{samepage}
\begin{gather}
	\Delta f(q) \leq -\frac{3}{4} \ln\ln \left( e + r^2 \right)
	+
	\frac{1}{4} \ln \ln \left(
		e + r^2 + 1
	\right)
	\\
	\nonumber
	+
	\frac{1}{4} \ln \ln \left(
		e + r^2 + 1 - \sqrt{3}r
	\right)
	+
	\frac{1}{4} \ln \ln \left(
		e + r^2 + 1 + \sqrt{3}r
	\right)
\end{gather}
\end{samepage}
The RHS obviously tends to 0 as $r\to \infty$, because $r^2$ dominates the other terms in the $\ln$-function and the coefficients in front of $\ln$-functions add up to 0. 
On the other hand, differentiating the RHS by $r$ yields
\begin{equation}
\begin{aligned}
	-\frac{2}{  r^2  \ln \left( r^2 \right) }
	+
	\frac{1}{ \left( r^2 - \sqrt{3}r \right) \ln \left( r^2 - \sqrt{3}r \right) }
	&
	\\ 	
	+
	\frac{1}{ \left( r^2 + \sqrt{3}r \right) \ln \left( r^2 + \sqrt{3}r \right) }
	&> 0
\end{aligned}
\end{equation}
which is positive due to the convex nature of the occurring functions as $r \to \infty$.
This allows us to deduct as follows: in the direction of the largest possible values for $\Delta f(q)$, we have $\Delta f(q)$ tending to $0$ from the negative values (since the derivation by $r$ is positive). It follows that $\Delta f(q)$ must be negative for all $r \in \mathbb{R}_+$ past a certain threshold, leaving only a finite set of states for which $\Delta f(q) \geq 0$, and thus fulfilling \eqref{eq::kendall_cond}.
\\

To finish the prove (for Theorem \ref{th::one}), we have to show that the non-transience in the cases $d=2$ and $d=3$ is indeed a null-recurrence and that all these results stay true when recombining $(\qp_t)$ with $(\qc_t)$ to the original DTMC $(q_t)$.

Regarding the first part, note that if $d=2$, $(\qp_t)$ is a simple symmetric random walk with an additional probability to stay idle, due to the correspondence
\begin{equation}
    \begin{pmatrix}
        q^{(1)}_t \\ q^{(2)}_t
    \end{pmatrix}
    \operatorname{mod} \mathbf{1}
    \ \Longleftrightarrow \
    q^{(1)}_t - q^{(2)}_t
    =
    q^{(1)}_0 - q^{(2)}_0 + \sum_{\tau=0}^{t-1} a^{(1)}_\tau - a^{(2)}_\tau
\end{equation}
where $q^{(i)}_t$ means the $i$th entry of the state vector $\qp_t$.
Since idle time slots will strictly increase any passage times, the return time for $(\qp_t)$ must be larger than that of the simple symmetric random walk. But latter is well known to have infinite return time and thus null-recurrence follows for $(\qp_t)$.

The same argument can be applied to the case $d=3$, via the correspondence
\begin{equation}
    \begin{pmatrix}
        q^{(1)}_t \\ q^{(2)}_t \\ q^{(3)}_t
    \end{pmatrix}
    \operatorname{mod} \mathbf{1}
    \ \Longleftrightarrow \
    \begin{pmatrix}
        q^{(1)}_t - q^{(2)}_t
        \\
        q^{(1)}_t - q^{(3)}_t
    \end{pmatrix}
\end{equation}
As before, the entries on the RHS constitute 2 simple symmetric random walks with idle times, which imply null-recurrence for $(\qp_t)$.

Regarding the the combination of $(\qp_t)$ with $(\qc_t)$, note that $(\qc_t)$ is per definition a simple random walk (with idle times) on the half line. Furthermore, $(\qc_t)$ is (positive) recurrent due to the assumption $p < \e{m}$. And since both DTMCs are independent, their combination $(q_t)$ naturally yields a null-recurrent DTMC, which finishes the proof.

\section{Conclusion}
We could prove that a batch of $d$ synchronized queues with i.i.d. arrivals, set up in a time-discrete manner and operating under an optimal control is quasi-stable for $d \in \{2,3\}$ and unstable for $d \geq 4$. Here, quasi-stable and unstable refer to null-recurrence and transience of the corresponding DTMC, respectively. This implies that for $d \in \{2,3\}$, the queue states possibly becomes infinite in some time slots, but will not tend towards it.

As a parallel result, we could prove that the power sums over skewed, normalized binomial coefficients do diverge for $d \in \{2,3\}$ and converge for $d\geq 4$.

\section{Open Questions}

As already investigated for the 2-dimensional case in continuous-time by \cite{Latouche1981}, it stands to question, whether the queueing process becomes stable if the arrival process is assumed to be semi-stochastic. E.g. one could assume that in every consecutive interval of $T \gg 0$ time slots, the same amount of customers has arrived at each queue. Another option would be to slightly in- or decrease the arrival rates based on the excess in a manner that minimizes it.

\begin{figure}
	\centering
	\def\svgwidth{\linewidth}
	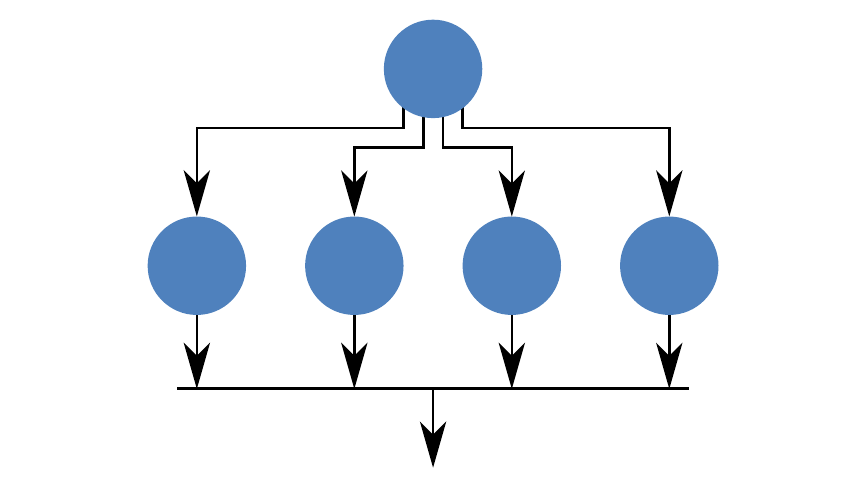
	\caption{Network, consisting of a conventional queue, that redirects its customers to different queues of a batch of synchronized queues.}
	\label{fig::mitigation}
\end{figure}

Furthermore, in a network context, the stochastics from the arrival processes might be mitigated entirely, before they reach the batch. See e.g. \figref{fig::mitigation}, where a controller can decide which of the synchronized queues to feed, while the only stochastic arrivals happen to the upper queue.

Finally, (though we assume it to be true) it stands to show that our results can be generalized to the continuous-time model and to the case, in which service of the batch requires more than one customer at certain queues (asymmetric batch-throughput).

\section*{Acknowledgment}
This work is part of and thereby funded by the DFG Priority Program 1914

\bibliographystyle{ieeetr}
\bibliography{oii}

\begin{thebibliography}{1}

\bibitem{Harrison1973}
J.~.~M. Harrison, ``{Assembly-like Queues},'' {\em Journal of Applied
  Probability}, 1973.

\bibitem{DeCuypere2014}
E.~{De Cuypere}, K.~{De Turck}, and D.~Fiems, ``{A Maclaurin-series expansion
  approach to multiple paired queues},'' {\em Operations Research Letters},
  2014.

\bibitem{Olvera-Cravioto2014}
M.~Olvera-Cravioto and O.~Ruiz-Lacedelli, ``{Parallel queues with
  synchronization},'' {\em arXiv}, 2014.

\bibitem{Gurvich2014}
I.~Gurvich and A.~Ward, ``{On the dynamic control of matching queues},'' {\em
  Stochastic Systems}, 2014.

\bibitem{Latouche1981}
G.~Latouche, ``{Queues with paired customers},'' {\em Journal of Applied
  Probability}, 1981.

\bibitem{Cusick1989}
T.~W. Cusick, ``{Recurrences for sums of powers of binomial coefficients},''
  {\em Journal of Combinatorial Theory, Series A}, 1989.

\bibitem{Kendall1951}
D.~G. Kendall, ``{On non-dissipative Markoff chains with an enumerable infinity
  of states},'' {\em Mathematical Proceedings of the Cambridge Philosophical
  Society}, 1951.

\end{thebibliography}

\end{document}